\newcommand{\R}{\mathbb{R}}
\newcommand{\x}{\textbf{x}}
\newcommand{\w}{\textbf{w}}
\newcommand{\z}{\textbf{z}}
\begin{document}

\title{An Integer Program for Pricing Support Points of Exact Barycenters}

\author{Steffen Borgwardt\inst{1} \and Stephan Patterson\inst{2}}

\institute{\email{\href{mailto:steffen.borgwardt@ucdenver.edu}{steffen.borgwardt@ucdenver.edu}};
University of Colorado Denver 
\and \email{\href{mailto:stephan.patterson@lsus.edu}{stephan.patterson@lsus.edu}}; Louisiana State University in Shreveport
}

\date{}

\maketitle
\begin{abstract}
The computation of exact barycenters for a set of discrete measures is of interest in applications where sparse solutions are desired, and to assess the quality of solutions returned by approximate algorithms and heuristics. The task is known to be NP-hard for growing dimension and, even in low dimensions, extremely challenging in practice due to an exponential scaling of the linear programming formulations associated with the search for sparse solutions.

A common approach to facilitate practical computations is an approximation based on the choice of a small, fixed set $S_0$ of support points, or a fixed set $S^*_0$ of combinations of support points from the measures, that may be assigned mass. Through a combination of linear and integer programming techniques, we model an integer program to compute additional combinations, and in turn support points, that, when added to $S^*_0$ or $S_0$, allow for a better approximation of the underlying exact barycenter problem. The approach improves on the scalability of a classical column generation approach: 
instead of a pricing problem that has to evaluate exponentially many reduced cost values, we solve a mixed-integer program of quadratic size. The properties of the model, and practical computations, reveal a tailored branch-and-bound routine as a good solution strategy.
\end{abstract}

\noindent{\bf Keywords:} discrete barycenter, integer programming, column generation\\
\noindent{\bf MSC 2010:} 90B80, 90C08, 90C10, 90C11, 90C46

\section{Introduction}

Barycenter problems have seen much attention in the literature due to their relevance for applications in a wide range of fields; see Section \ref{sec:literature}. Data for these problems is typically represented as so-called {\em discrete (probability) measures} $P_i$, i.e., probability measures with a finite number $|P_i|$ of support points.  

The input for the {\em discrete barycenter problem} is a set of discrete measures $P_1, \ldots, P_n$ whose support points lie in $\R^d$ and a corresponding set of positive weights $\lambda_1, \ldots, \lambda_n$ with $\sum_{i=1}^n \lambda_i = 1$. Then the discrete barycenter problem is: find a measure $\bar P$ such that
\[ \phi(\bar P) := \sum\limits_{i=1}^n \lambda_i W_2(\bar P, P_i)^2 = \inf\limits_{P \in \mathcal{P}^2(\R^d)} \sum\limits_{i=1}^n \lambda_i W_2(P,P_i)^2,\]
where $W_2$ is the quadratic Wasserstein distance and $\mathcal{P}^2(\R^d)$ is the set of all probability measures on $\R^d$ with finite second moments \cite{ac-11}. Because the measures $P_1, \ldots, P_n$ are discrete, all barycenters $\bar P$ are also supported on a finite subset of $\R^d$. In addition to information on the support set and associated masses, computing the associated transport cost $\phi(\bar P )$ requires identifying the {\em destination (support) points} in each $P_1, \ldots, P_n$ to which mass is transported from each support point of $\bar P$. Then the optimal transport cost $\phi (\bar P)$ can be computed as the summation of the weighted squared Euclidean distances between each barycenter support point and its destination points \cite{abm-16,coo-15,v-09}. 


\subsection{Fixed-Support Approximations}\label{sec:literature}

The computation of barycenters arises in a vast range of applications, from statistics \cite{mtbmmh-15,zp-17}, over economics \cite{bhp-13,cmn-10}, game theory \cite{ce-10,coo-15}, physics and the material sciences \cite{cfk-13,jzd-98,ty-05}, to image processing \cite{jbtcg-19,sa-19,cad-20} and machine learning \cite{hbccp-19,hhpj-19,hnybhp-17,shbmccps-18,ydpbbgc-19}. We refer the reader to the recent surveys \cite{pc-18,pz-19} and the textbook \cite{v-09}. 

In recent years, there has been growing interest in approaching clustering and learning applications for high-dimensional data \cite{cad-20,hhpj-19,hnybhp-17,shdj-20}, such as those arising in natural language processing \cite{xwlc-18,ylwwll-17}. 
However, the computation of a barycenter or an approximation is challenging: the barycenter problem is known to be NP-hard for growing dimension, and this hardness extends to approximate computations, some special cases, and other optimal transport metrics \cite{ab-21}. A variation in which one wants to find a sparse barycenter, i.e., one with a low number of support points, remains NP-hard even for dimension two and three measures \cite{bp-21a}. In contrast, the barycenter problem is solvable in polynomial time for fixed dimension \cite{ab-21b}. 
However, even for dimension $d=2$, this observation does not lead to a practical algorithm.


There is a wide field of research that addresses these challenges through approximate and heuristic methods. A popular approach is to a priori specify a support set $S_0\subset \mathbb{R}^d$ of {\em possible support points} and then optimize over measures supported on $S_0$, i.e., over the masses associated to each support point. There are many variations of such {\em fixed-support approximations} \cite{ab-21}. 
In particular, a restriction of $S_0$ to the union of original support points in the measures leads to a strongly-polynomial-time $2$-approximation in any dimension \cite{b-17}. For $n$ measures supported on a shared grid, a barycenter is contained in an $n$-times finer grid \cite{bp-18}. 

In general, the fixation of a polynomial-size $S_0$ leads to a polynomial-size linear program (LP) (and avoids an exponential scaling in $d$) which can be solved directly using blackbox LP solvers or through specially tailored approaches such as entropic regularization introduced by \cite{c-13,cd-14}. In the latter approach, the LP is made strongly convex through the addition of a small entropy cost, so that more efficient solvers can be applied. This approach has been refined for competitive, approximate large-scale computations; see, e.g., \cite{bccnp-14,jcg-20,ktddgu-19,lhccj-20,sgpcbndg-15}

An alternative approach to obtain an LP is to use the {\em non-mass splitting property} \cite{abm-16} of barycenters and the Wasserstein distance: given a set of single support points in $n$ measures, the optimal location for joint transport of mass to them is their weighted mean; we describe this observation in some detail in Section \ref{sec:reducedcosts}. A barycenter decomposes into such {\em combinations of support points}, each associated some mass, of the original measures. In turn, this allows one to perform an exact barycenter computation through modeling an LP that assigns mass optimally to the set $S^*$ of all combinations; see, e.g., \cite{abm-16,bccnp-14,bp-21}. Note that $S^*$ generally is of exponential size $|S^*|=\prod_{i=1}^n |P_i|$. 
Again, it is of interest to approximate a barycenter through a smaller set of {\em possible combinations} $S_0^*$. 

Recent algorithms that take alternative approaches, i.e., do not build on $S_0$ or $S_0^*$, such as the Frank-Wolfe algorithm of \cite{lspc-10} and the Functional Gradient Descent algorithm of \cite{swrh-20} are rare exceptions. They underline a desire to step away from a fixation of $S_0$ or $S_0^*$; at the same time, it remains desirable to retain the ability to connect to and use tools from the long line of research on fixed-support approximations.


\subsection{Contributions and Outline}\label{sec:contributions}

For both types of fixed-support approximations, based on optimization over a given support set $S_0$ or a set of combinations $S_0^*$, the size of $S_0$ or $S_0^*$ is the bottleneck. In this work, we are interested in the generation of support points or combinations that can be used for $S_0$ or $S_0^*$, respectively.  
We describe an integer program, and develop a branch-and-bound approach, to find a new combination to add to a given set $S_0^*$ that could allow an improvement of an optimal barycenter approximation over the new $S_0^*$. The combination corresponds to a new support point for $S_0$, too. As such, our method can be used to, {\em a posteriori}, improve an approximate barycenter computed through {\em any} method in the literature.

When used in an iterative scheme, our method would improve any approximate barycenter to an {\em exact} barycenter. Thus, it can be used to compete with the method in \cite{bp-21} and will outperform it as soon as the exponential-size reduced-cost vector becomes prohibitive compared to setup and solution of a polynomial-size integer program. As memory requirements remain low, and only computation time spent remains as a main bottleneck, one obtains the ability to compute exact barycenters for larger instances than before. 

However, this favorable behavior also reveals its main challenges: by design, the method (if run in an iterative scheme) solves the NP-hard exact barycenter problem \cite{ab-21}. In turn, the approach is computationally expensive and can only be expected to be applied to problems of small-to-moderate size. Further, multiple combinations may have to be computed before a strict improvement is possible and accuracy of computations requires special attention.

The paper is structured as follows. In Section \ref{sec:reducedcosts}, we first recall some background on an (exponential-size) LP formulation for the barycenter problem based on $S^*$ and the non-mass splitting property. Then we turn to our main contribution: we devise an integer program (IP) to search for a combination of support points to add to a given $S_0^*$ such that an improved approximation may be possible. We then show that this IP can be transformed into a simpler mixed-integer program (MIP) through the use of some optimality conditions.

In Section \ref{sec:IpvsLP}, we discuss a practical implementation. We first turn to a theoretical study of properties of the mixed-integer program, such as the fractionality of optimal solutions in a relaxation, which inform the design of solution strategies. Then we analyze the practical results of computational experiments, and discuss the potential and limitations of the proposed approach. The experiments highlight a trade-off of lower memory requirements with higher setup times and reveal viable approaches in practice.
We conclude in Section \ref{sec:conclusion} with some final remarks and promising directions of research for further improvements.

\section{An Integer Program for Support Point Generation}\label{sec:reducedcosts}

\subsection{A Linear Program for Exact Barycenter Computations}\label{sec:barylp}

We begin by recalling the construction of a linear program for the computation of a barycenter based on the set $S^*$ of all combinations of support points of measures $P_1, \ldots, P_n$. It has been shown \cite{abm-16} that all barycenters satisfy the {\em non-mass-splitting property}, that is, all mass associated with a barycenter support point is transported to a single destination point $\x_i$ in each $P_i$, $i = 1, \ldots, n$. Furthermore, for a given combination of destination points, the mass assigned to that combination must be located at the weighted mean $\sum_{i=1}^n \lambda_i \x_i$, as any other location produces a strictly increased cost. For this reason, a barycenter can be described as a set of combinations of destination points and corresponding masses, and the location of the mass assignment can be computed as the weighted mean, when needed. Specifically, 
$S^*$ contains elements $s_h = (\x_1^h, \ldots, \x_n^h)$ for $h = 1, \ldots, \prod_{i=1}^n |P_i|$, and each $s_h$ has a corresponding fixed unit-mass transport cost $c_h$ for each potential combination of destination points. The cost $c_h$  be computed without computing the weighted mean itself, shown in \cite{bp-21} to be
\[c_h = \sum_{i=1}^{n-1} \lambda_i \sum_{j=i+1}^{n} \lambda_j || \x_i^h - \x_j^h ||^2. \] 
Then the transport cost of the barycenter $\phi(\bar P)$ can be computed as the sum of the mass assigned to $s_h$ times the unit transport cost  $c_h$. 

A resulting linear program for solving the discrete barycenter problem is as follows. Let $c=(c_h)$ be the vector stating the unit-mass transport costs for each element of $S^*$, and let $d=(d_{ik})$ be the vector stating the mass at support point $\x_{ik}$, the $k^{th}$ support point of $P_i$. The $0,1$-matrix $A$ gives constraints that enforce that each support point in each measure receives the correct mass: each row has $1$-entries for all combinations $s_h$ in which $\x_{ik}$ appears, and $0$ otherwise. The variable vector $\w=(w_h)$ denotes mass assigned to each combination $s_h$. This gives a standard form linear program.
\begin{equation*}\label{LPw}
\begin{array}{crl}
\tag{Bary-LP}  \mathrm{min}&  c^T \w & \nonumber \\
\mathrm{s.t.  } &A \w = &d \\
&\w \geq & 0. \nonumber
\end{array}
\end{equation*}

The number of variables in LP (\ref{LPw}) is $\prod_{i=1}^{n} |P_i|$, which scales exponentially in the number of measures $n$. However, LP (\ref{LPw}) contains only $\sum_{i=1}^n |P_i|$ constraints, which makes it a prime candidate for column generation as explored in \cite{bp-21}. 

\subsection{An Integer Program for Support Point Generation}

Classical column generation begins with a (reduced) master problem containing only a small number of all possible variables; for LP (\ref{LPw}), this is a small number of all possible combinations. Information from the current solution is used to choose a new variable, found by solving a separate pricing problem, for introduction to the master problem.

Specifically, a reduced master problem begins with a set of possible combinations $S_0^*$ that is a small subset of the set of all combinations $S^*$. 
Then, the classic pricing problem uses the dual vector $y$ from the current master problem, whose elements correspond to each constraint in the master problem, and $A_h$, the column of $A$ corresponding to each $s_h$. By minimizing $c_h - y^T A_h$ (or, equivalently, maximizing $y^T A_h- c_h$), an index $h^*$ corresponding to combination $s_{h^*}$ is found, and $s_{h^*}$ is the chosen combination for introduction to the master problem. 

It is possible to solve the classic pricing problem by fully processing the vector $c$, which has one entry for each combination $s_h \in S^* \backslash S_0^*$, and selecting the best value \cite{bp-21}. The primary goal and contribution of the work in this paper is to reformulate the pricing problem {\em without} searching the exponentially-scaling $c$. Instead, we seek to choose a combination $s_{h^*}$ by formulating a pricing program that will individually select an element $\x_i$ of each measure $P_i$. We begin the development of a new formulation by expanding the expression for $c_h$ from Section \ref{sec:barylp} in the same manner as in \cite{bp-21}:
\begin{align*} c_h &= \sum_{i=1}^{n-1} \lambda_i \sum_{j=i+1}^{n} \lambda_j (\x_i^h - \x_j^h)^T(\x_i^h - \x_j^h) \\
 &= \sum_{i=1}^{n-1} \lambda_i \sum_{j=i+1}^n \lambda_j ((\x_i^h)^T \x_i^h + (\x_j^h)^T \x_j^h) - 2\sum_{i=1}^{n-1}\lambda_i (\x_i^h)^T \sum_{j=i+1}^n \lambda_j \x_j^h \\
&= \sum_{i=1}^{n-1}\sum_{j=i+1}^{n}\lambda_i \lambda_j ((\x_i^h)^T \x_i^h -2(\x_i^h)^T \x_j^h +(\x_j^h)^T \x_j^h).
\end{align*}

We reformulate this cost expression, returning to using the original list of support points $\x_{ik}$ instead of the combination $h$. Additionally, instead of the classical pricing objective function $\max y^T A_h- c_h$, we use $\text{max} \sum_{h} y^T A_h -c_h$ and will guarantee through a set of constraints that a single index from $h$ is selected and all other summands are $0$.  

For each $i$, let $z_{ik}$ be a $0,1$-variable where we interpret $z_{ik} = 1$ to mean $\x_i^h = \x_{ik}$. Then the first term $(\x_i^h)^T \x_i^h$ of the expansion above can be rewritten as $\sum_{i=1}^{n-1} \sum_{j=i+1}^{n} \lambda_i \lambda_j (\x_i^h)^T \x_i^h = \sum_{i=1}^{n-1}\sum_{j=i+1}^{n}\sum_{k=1}^{|P_i|}\lambda_i \lambda_j \x_{ik}^T \x_{ik} z_{ik}$, with the other terms being reformulated similarly.   

Additionally, at the optimal index $h$, the vector $\z = (z_{ik})$ is precisely $A_h$, and so $y^T A_h$ may reformulated as well. Since the constraints in LP (\ref{LPw}) ensure that $\x_{ik}$, the $k^{th}$ support point of measure $P_i$, receives the correct mass $d_{ik}$, we match the formatting on the index of the elements of the dual vector $y$ as $y_{ik}$, $i = 1,\ldots,n$ and $k = 1, \ldots, |P_i|$, giving $y=(y_{ik})$, and $y^T A_h = \sum_{i=1}^n \sum_{k=1}^{|P_i|} y_{ik}z_{ik}$.
\[ \text{max} \;\sum_{i=1}^n \sum_{k=1}^{|P_i|} y_{ik}z_{ik} - \sum_{i=1}^{n-1}\sum_{j=i+1}^{n} \sum_{k=1}^{|P_i|} \sum_{m=1}^{|P_j|} \lambda_i \lambda_j (\x_{ik}^T \x_{ik} -2\x_{ik}^T \x_{jm} +\x_{jm}^T \x_{jm})z_{ik}z_{jm}.\]

To enforce that we choose exactly one element from each measure, we introduce constraints
\[ \sum_{k = 1}^{|P_i|} z_{ik} = 1 \hspace{.2in} \forall i = 1, \ldots, n. \]
Satisfaction of these constraints allows us to expand the quadruple-sum in the objective and simplify two of its three terms to  
\begin{align*} & \text{max}  \; \sum_{i=1}^n\sum_{k=1}^{|P_i|} y_{ik}z_{ik} - \sum_{i=1}^{n-1}\sum_{j=i+1}^{n} \sum_{k=1}^{|P_i|} \lambda_i \lambda_j \x_{ik}^T \x_{ik}z_{ik} \\
&  +2\sum_{i=1}^{n-1}\sum_{j=i+1}^{n} \sum_{k=1}^{|P_i|} \sum_{m=1}^{|P_j|}\lambda_i \lambda_j\x_{ik}^T \x_{jm}z_{ik}z_{jm}  -\sum_{i=1}^{n-1}\sum_{j=i+1}^{n} \sum_{m=1}^{|P_j|} \lambda_i \lambda_j\x_{jm}^T \x_{jm}z_{jm}.\end{align*} 

However, this remains a quadratic objective function due to the product $z_{ik}z_{jm}$ in the term
\[+2\sum_{i=1}^{n-1}\sum_{j=i+1}^{n} \sum_{k=1}^{|P_i|} \sum_{m=1}^{|P_j|}\lambda_i \lambda_j \x_{ik}^T \x_{jm}z_{ik}z_{jm}. \] 
To address this, we use some classical integer programming techniques. Recall that the $z_{ik}$ are $0,1$-variables. First, we introduce new variables $z_{ijkm}$ to represent each product $z_{ik} z_{jm}$. Second, we add linear constraints that link the new variables $z_{ijkm}$ to the original $z_{ik},z_{jm}$:
\begin{align*}
z_{ijkm} &\leq z_{ik} \\
z_{ijkm} &\leq z_{jm} \\
z_{ijkm} &\geq z_{ik}+z_{jm}-1. \end{align*}

The result is a linearization of the integer program in which $z_{ik}=0$ or $z_{jm}=0$ forces $z_{ijkm}=0$ due to the first two constraints, and $z_{ikjm}=1$ if $z_{ik}=z_{jm}=1$ due to the third constraint. This implies that $z_{ijkm}=z_{ik}z_{jm}$ for all $z_{ik}, z_{jm} \in \{0,1\}$. These constraints only need to be stated for pairs $i,j$ with $i<j$. In the following, the vector $\z=\binom{\z^1}{\z^2}$ lists all $\z^1=(z_{ik})$, followed by all $\z^2=(\z_{ijkm})$. We use the notation $\z\in \{0,1\}$ to denote that all its components lie in the specified domain.

We sum up the construction through a full formulation of the integer program for generating a combination of optimal reduced cost, and a corresponding formal statement. 

\begin{equation*}\label{Gen-IP}
\begin{array}{crll}
  \mathrm{max}  & \sum\limits_{i=1}^n \sum\limits_{k=1}^{|P_i|} y_{ik}z_{ik} - \sum\limits_{i=1}^{n-1}&\sum\limits_{j=i+1}^{n} \sum\limits_{k=1}^{|P_i|} \lambda_i \lambda_j \x_{ik}^T \x_{ik}z_{ik}-&\sum\limits_{i=1}^{n-1}\sum\limits_{j=i+1}^{n} \sum\limits_{m=1}^{|P_j|}\lambda_i \lambda_j \x_{jm}^T \x_{jm}z_{jm}\\ &+2\sum\limits_{i=1}^{n-1}\sum\limits_{j=i+1}^{n} \sum\limits_{k=1}^{|P_i|} \sum\limits_{m=1}^{|P_j|} & \lambda_i \lambda_j \x_{ik}^T \x_{jm}z_{ijkm}  \nonumber \tag{Gen-IP} \\
\mathrm{s.t.} & \sum\limits_{k = 1}^{|P_i|} z_{ik} & = 1  &\forall i = 1, \ldots, n \\
& z_{ijkm}  &\leq z_{ik}, &\forall i = 1, \ldots, n, \forall j = 1, \ldots, n, j > i,\\
&&&\forall k = 1, \ldots, |P_i|, \forall m = 1, \ldots, |P_j| \\
&z_{ijkm}  &\leq z_{jm}, &\forall i = 1, \ldots, n, \forall j = 1, \ldots, n, j > i,\\
&&&\forall k = 1, \ldots, |P_i|, \forall m = 1, \ldots, |P_j| \\
& z_{ijkm} &\geq z_{ik}+z_{jm}-1, &\forall i = 1, \ldots, n, \forall j = 1, \ldots, n, j > i,\\
&&&\forall k = 1, \ldots, |P_i|, \forall m = 1, \ldots, |P_j| \\
& \z  &\in \{0,1\} \nonumber 
\end{array}
\end{equation*}

\begin{theorem}\label{thm:main}
Given a set of possible combinations $S_0^*$, and an optimal fixed-support approximation of the barycenter problem over $S_0^*$, IP (\ref{Gen-IP}) finds a new combination of best reduced cost from the set of all combinations $S^*$.
\end{theorem}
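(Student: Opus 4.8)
The plan is to show that the feasible solutions of IP (\ref{Gen-IP}) are in one-to-one correspondence with the combinations in $S^*$, and that the objective value attained at each feasible solution equals the pricing reduced cost $y^T A_h - c_h$ of the associated combination $s_h$. Maximizing the objective then returns a combination of best reduced cost, and optimality of the approximation over $S_0^*$ guarantees that a strictly improving such combination is new.

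First I would establish the correspondence. The assignment constraints $\sum_{k=1}^{|P_i|} z_{ik} = 1$ together with $\z \in \{0,1\}$ force exactly one $z_{ik}$ to equal $1$ for each measure $P_i$; this selects a single support point $\x_{ik}$ from each $P_i$, hence a combination $s_h = (\x_1^h, \ldots, \x_n^h) \in S^*$, and every combination in $S^*$ arises this way. So $\z^1 = (z_{ik})$ ranges bijectively over $S^*$. The linking constraints then force $z_{ijkm} = z_{ik} z_{jm}$ for all binary values, as already observed, so $\z^2 = (z_{ijkm})$ is determined by $\z^1$ and the full vector $\z$ encodes exactly the chosen combination.

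Second, I would verify that the objective evaluates to $y^T A_h - c_h$. Since $A_h$ is the $0,1$-indicator of the support points appearing in $s_h$, we have $A_h = \z^1$ and hence $y^T A_h = \sum_{i=1}^n \sum_{k=1}^{|P_i|} y_{ik} z_{ik}$, matching the first group of terms in the objective. For the cost, I would substitute $z_{ijkm} = z_{ik} z_{jm}$ into the quadruple-sum form of $c_h$ and use the assignment constraints to collapse the two single-index groups: because $\sum_m z_{jm} = 1$ and $\sum_k z_{ik} = 1$, the $\x_{ik}^T\x_{ik}$ and $\x_{jm}^T\x_{jm}$ terms lose their summation over the irrelevant index and reduce to the single sums appearing in (\ref{Gen-IP}), while the cross term $-2\lambda_i\lambda_j\x_{ik}^T\x_{jm}z_{ik}z_{jm}$ becomes the linearized term in $z_{ijkm}$. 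Collecting the three groups recovers exactly $-c_h$, so the objective equals $y^T A_h - c_h$ on the solution corresponding to $s_h$.

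Combining the two parts, IP (\ref{Gen-IP}) maximizes $y^T A_h - c_h$ over all $s_h \in S^*$, equivalently minimizing the reduced cost $c_h - y^T A_h$, so an optimal solution identifies a combination of best reduced cost. Moreover, optimality of the approximation over $S_0^*$ implies every column already present has nonpositive objective value $y^T A_h - c_h \le 0$; hence a strictly positive optimum corresponds to a combination outside $S_0^*$, i.e., a genuinely new, improving one, while a nonpositive optimum certifies optimality over all of $S^*$. I expect the one delicate step to be the collapse of the single-index quadratic terms via the assignment constraints, as this is the sole place where feasibility, rather than the mere substitution $z_{ijkm}=z_{ik}z_{jm}$, is invoked; the remaining correspondence and the objective identity then follow directly from the construction preceding the theorem.
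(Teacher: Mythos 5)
Your proposal is correct and follows essentially the same route as the paper, whose ``proof'' of Theorem~\ref{thm:main} is precisely the construction preceding it: the assignment constraints plus binarity put feasible $\z$ in bijection with $S^*$, the linking constraints force $z_{ijkm}=z_{ik}z_{jm}$, and the constraints $\sum_k z_{ik}=1$ collapse two of the three quadruple-sum terms so that the objective evaluates to $y^TA_h-c_h$. Your added remark on why a strictly positive optimum yields a combination outside $S_0^*$ is a correct (and slightly more explicit) justification of the word ``new'' than the paper provides.
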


Note that the input of $S_0^*$ and the best solution for that set of possible combinations are required for the specification of $y=(y_{ik})$. By contrast, the set of constraints of IP (\ref{Gen-IP}) is independent from this input. 


\subsection{A Simpler Mixed-Integer Program}\label{sec:mip}

Next, we simplify IP (\ref{Gen-IP}) to a smaller mixed-integer program (MIP), i.e., a program in which some of the variables are not required to be integer. The advantages over IP (\ref{Gen-IP}) will be twofold: in addition to the relaxation of some integer variables, we will be able to eliminate a type of main constraints.

To this end, we assume that $\x > 0$ in all measures. This assumption is not a restriction, as the region containing all $\x_{ik}$ can always be shifted to lie in the positive orthant of $\R^d$ without changing the optimal solution. This can be verified by recalling that $c_h = \sum_{i=1}^{n-1} \lambda_i \sum_{j=i+1}^{n} \lambda_j || \x_i^h - \x_j^h ||^2$ only takes into account (squared) pairwise distances of $\x_i^h,\x_j^h$. 
The consequence of this assumption is that $\x_{ik}^T \x_{jm}>0$ for all $\x_{ik},\x_{jm}$. 

We begin by showing that, in any optimal solution, $z_{ijkm}$ is chosen as large as possible, even without explicit specification of the constraints $z_{ijkm} \geq z_{ik}+z_{jm}-1$ or domain constraints $z_{ijkm}\in\{0,1\}$. 

\begin{lemma}\label{lem:auxvar}
Let $\x > 0$ and let $\z^*$ be an optimal solution to an LP relaxation of IP (\ref{Gen-IP}) without specification of the constraints  $z_{ikjm} \geq z_{ik}+z_{jm}-1$. Let further $\z^*_{ik} \in \{0,1\}$ for all $i = 1, \ldots, n$ and $k = 1, \ldots, |P_i|$. Then $\z^*\in \{0,1\}$.
\end{lemma}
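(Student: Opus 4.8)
The plan is to use the fact that, once the constraints $z_{ijkm} \ge z_{ik}+z_{jm}-1$ are dropped, each auxiliary variable $z_{ijkm}$ is free to be pushed up to the smaller of $z_{ik}$ and $z_{jm}$, and that the objective rewards doing exactly this. First I would isolate the role of a fixed $z_{ijkm}$ in the remaining relaxation: it occurs only in the two upper bounds $z_{ijkm} \le z_{ik}$ and $z_{ijkm} \le z_{jm}$, in the relaxed box $0 \le z_{ijkm} \le 1$, and in the single objective term $2\lambda_i\lambda_j \x_{ik}^T \x_{jm}\, z_{ijkm}$. In particular, the $z_{ijkm}$ are mutually decoupled, and none of them appears in the assignment constraints $\sum_k z_{ik}=1$; only the $z_{ik}$ tie these pieces together.

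Second, I would check the sign of that objective coefficient. Since $\lambda_i, \lambda_j > 0$ and, by the standing assumption $\x > 0$, each inner product $\x_{ik}^T \x_{jm}$ is strictly positive, the coefficient $2\lambda_i\lambda_j \x_{ik}^T \x_{jm}$ is strictly positive. This is the crux. I would then run an exchange argument on the given optimum $\z^*$, holding the integral values $z_{ik}^*$ fixed. The effective upper bound on each $z_{ijkm}$ is $\min(z_{ik}^*, z_{jm}^*)$, the box bound $z_{ijkm} \le 1$ being redundant because $z_{ik}^* \le 1$. If $z_{ijkm}^* < \min(z_{ik}^*, z_{jm}^*)$ for some quadruple, raising $z_{ijkm}^*$ to that minimum keeps the solution feasible, since no other constraint or variable is affected, while strictly increasing the objective, contradicting optimality. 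Hence $z_{ijkm}^* = \min(z_{ik}^*, z_{jm}^*)$ in every optimal solution; since $z_{ik}^*, z_{jm}^* \in \{0,1\}$ by hypothesis, this minimum lies in $\{0,1\}$, and so $\z^* \in \{0,1\}$.

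I do not anticipate a genuine obstacle: the statement reduces to a single optimality observation once the decoupling and the positivity of the coefficient are in place. The only points needing mild care are verifying $\x_{ik}^T \x_{jm} > 0$ directly from $\x > 0$ (this is an honest dot product of strictly positive vectors, not a squared norm), and confirming that removing the lower-bound constraints $z_{ijkm} \ge z_{ik}+z_{jm}-1$ is precisely what frees each $z_{ijkm}$ to be driven up to its envelope without any loss of feasibility. One should also note that the argument pins down each $z_{ijkm}^*$ uniquely via strict positivity, so the conclusion holds for \emph{any} optimal $\z^*$, not merely some distinguished one.
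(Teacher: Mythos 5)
Your proof is correct and follows essentially the same route as the paper: the auxiliary variables appear only in the final objective term with positive coefficient, so optimality forces $z^*_{ijkm}=\min\{z^*_{ik},z^*_{jm}\}\in\{0,1\}$. Your added remark that $\x>0$ gives \emph{strict} positivity of $\x_{ik}^T\x_{jm}$, so the conclusion holds for every optimal solution rather than just some optimal solution, is a small but genuine sharpening of the paper's argument, which only invokes nonnegativity of the coefficient.
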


\begin{proof}
We have to prove that the auxiliary variables $z^*_{ijkm}$ satisfy $z^*_{ijkm} \in \{0,1\}$. 
In the maximization objective, they only appear in the term
$$+2\sum\limits_{i=1}^{n-1}\sum\limits_{j=i+1}^{n} \sum\limits_{k=1}^{|P_i|} \sum\limits_{m=1}^{|P_j|}  \lambda_i \lambda_j \x_{ik}^T \x_{jm}z_{ijkm}.$$
Recall that $\x > 0$ by assumption, and that $\lambda > 0$ by definition. Thus, $\lambda_i \lambda_j \x_{ik}^T \x_{jm} \geq 0$. For fixed $z^*_{ik},z^*_{jm} \in \{0,1\}$, a maximal objective function value can only be achieved by choosing $z^*_{ijkm}$ as large as possible. As $z^*_{ijkm} \leq z^*_{ik},z^*_{jm}$, one obtains $z^*_{ijkm}= \min\{z^*_{ik},z^*_{jm}\} \in \{0,1\}$. This proves the claim.\hfill \qed\end{proof}

Lemma \ref{lem:auxvar} shows that, in an optimum of such a relaxation of IP (\ref{Gen-IP}), if $z^*_{ik}=z^*_{jm}=1$ then $z^*_{ijkm}=1$, and if $z^*_{ik}=0$ or $z^*_{jm}=0$ then $z^*_{ijkm}=0$. More generally, integrality of the $z^*_{ik}$ implies integrality of the auxiliary variables $z^*_{ijkm}$, and $z^*_{ijkm} \geq z^*_{ik}+z^*_{jm}-1$ is automatically satisfied. 
Thus, we can drop the constraints $z_{ijkm} \geq z_{ik}+z_{jm}-1$ and domain constraints $z_{ijkm}\in\{0,1\}$ from IP (\ref{Gen-IP}) to obtain an MIP that retains the same optimal set. Let us state this MIP; recall the notation $\z^1=(z_{ik})$.

\begin{equation*}\label{Gen-MIP}
\begin{array}{crll}
  \mathrm{max}  & \sum\limits_{i=1}^n \sum\limits_{k=1}^{|P_i|} y_{ik}z_{ik} - \sum\limits_{i=1}^{n-1}&\sum\limits_{j=i+1}^{n} \sum\limits_{k=1}^{|P_i|} \lambda_i \lambda_j \x_{ik}^T \x_{ik}z_{ik}-&\sum\limits_{i=1}^{n-1}\sum\limits_{j=i+1}^{n} \sum\limits_{m=1}^{|P_j|} \x_{jm}^T \x_{jm}z_{jm}\\ &+2\sum\limits_{i=1}^{n-1}\sum\limits_{j=i+1}^{n} \sum\limits_{k=1}^{|P_i|} \sum\limits_{m=1}^{|P_j|} & \lambda_i \lambda_j \x_{ik}^T \x_{jm}z_{ijkm}  \nonumber \tag{Gen-MIP} \\
\mathrm{s.t.} & \sum\limits_{k = 1}^{|P_i|} z_{ik} & = 1  &\forall i = 1, \ldots, n \\
& z_{ijkm}  &\leq z_{ik}, &\forall i = 1, \ldots, n, \forall j = 1, \ldots, n, j > i,\\
&&&\forall k = 1, \ldots, |P_i|, \forall m = 1, \ldots, |P_j| \\
&z_{ijkm}  &\leq z_{jm}, &\forall i = 1, \ldots, n, \forall j = 1, \ldots, n, j > i,\\
&&&\forall k = 1, \ldots, |P_i|, \forall m = 1, \ldots, |P_j| \\
& \z^1  &\in \{0,1\} \nonumber
\end{array}
\end{equation*}


In fact, the maximal choice of $z^*_{ijkm}= \min\{z^*_{ik},z^*_{jm}\}$ shown in the proof of Lemma \ref{lem:auxvar} also holds when $z^*$ is fractional, i.e., when $z^*_{ik},z^*_{jm} \neq 0,1$. This will be helpful in the further analysis and for the design of a viable solution approach in Section \ref{sec:IpvsLP}.



%


We sum up these observations with a formal statement on the equivalence of the optimal sets of IP (\ref{Gen-IP}) and MIP (\ref{Gen-MIP}).

\begin{theorem}\label{thm:main2}
Let $\x > 0$. Given a set of possible combinations $S_0^*$, and an optimal fixed-support approximation of the barycenter problem over $S_0^*$, MIP (\ref{Gen-MIP}) finds a new combination of best reduced cost from the set of all combinations $S^*$. The sets of optimal solutions $\z^*$ of IP (\ref{Gen-IP}) and MIP (\ref{Gen-MIP}) are identical.
\end{theorem}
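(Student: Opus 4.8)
The plan is to prove the second assertion first---that the optimal sets of IP (\ref{Gen-IP}) and MIP (\ref{Gen-MIP}) coincide---and then derive the first assertion as an immediate consequence of Theorem \ref{thm:main}. Since both programs maximize the \emph{same} objective over the common variable space $\z = \binom{\z^1}{\z^2}$, it suffices to show that the two feasible regions yield the same optimal value and the same set of maximizers. I would establish this by a two-sided comparison: first that relaxing the IP to the MIP cannot decrease the optimal value, and second that every MIP optimum is in fact IP-feasible, so the value cannot increase either.

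For the first direction, I observe that MIP (\ref{Gen-MIP}) is obtained from IP (\ref{Gen-IP}) by \emph{deleting} the constraints $z_{ijkm} \geq z_{ik}+z_{jm}-1$ and the integrality requirements $z_{ijkm} \in \{0,1\}$, while keeping the equalities $\sum_k z_{ik}=1$, the upper bounds $z_{ijkm} \le z_{ik}$ and $z_{ijkm} \le z_{jm}$, and $\z^1 \in \{0,1\}$. Hence every IP-feasible point is MIP-feasible, the MIP feasible region contains the IP feasible region, and for the common maximization objective one gets $\mathrm{val}(\mathrm{MIP}) \ge \mathrm{val}(\mathrm{IP})$.

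For the reverse direction, I would take any optimal solution $\z^*$ of MIP (\ref{Gen-MIP}). Its components $z^*_{ik}$ lie in $\{0,1\}$ because $\z^1 \in \{0,1\}$ is retained. Invoking the argument of Lemma \ref{lem:auxvar}, the auxiliary variables appear in the objective only through the term with coefficients $\lambda_i \lambda_j \x_{ik}^T \x_{jm}$, which are \emph{strictly positive} under the assumption $\x > 0$; a maximizer must therefore drive each $z^*_{ijkm}$ up to the smaller of its two upper bounds, giving $z^*_{ijkm} = \min\{z^*_{ik}, z^*_{jm}\} \in \{0,1\}$. Consequently $\z^* \in \{0,1\}$ and the deleted inequality $z^*_{ijkm} \ge z^*_{ik}+z^*_{jm}-1$ holds automatically, so $\z^*$ is IP-feasible. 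This yields $\mathrm{val}(\mathrm{MIP}) \le \mathrm{val}(\mathrm{IP})$, and combined with the previous paragraph, $\mathrm{val}(\mathrm{MIP}) = \mathrm{val}(\mathrm{IP})$ with every MIP optimum being an IP optimum. Conversely, every IP optimum is MIP-feasible and attains the common optimal value, hence is an MIP optimum; the two optimal sets therefore coincide. The first assertion then follows, since by Theorem \ref{thm:main} the shared optimal solutions correspond to a combination of best reduced cost in $S^*$.

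The step I expect to be the crux is the reverse inclusion, specifically the claim that relaxing integrality and dropping the lower-bound linking constraint does not enlarge the optimal set. The delicate point is that the MIP leaves $z_{ijkm}$ unbounded below and unconstrained by the third inequality, so a priori its feasible region is strictly larger and could admit maximizers with fractional or ``undersized'' auxiliary values. What rescues the argument is the strict positivity $\lambda_i \lambda_j \x_{ik}^T \x_{jm} > 0$ guaranteed by $\x > 0$: it makes the objective strictly increasing in each $z_{ijkm}$, so every maximizer is forced to the unique value $\min\{z^*_{ik}, z^*_{jm}\}$, simultaneously restoring integrality and the dropped inequality and ruling out spurious optima. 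Were any such coefficient zero, the corresponding $z_{ijkm}$ would be free at optimality and the optimal set could genuinely grow; this is precisely why the hypothesis $\x > 0$ is needed for the equality of the optimal sets, and not merely for the equality of the optimal values.
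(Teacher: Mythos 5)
Your proposal is correct and follows essentially the same route as the paper: the theorem is stated there as a summary of the discussion surrounding Lemma \ref{lem:auxvar}, namely that dropping the lower-bound linking constraints and the integrality of the $z_{ijkm}$ enlarges the feasible region but not the optimal set, because every MIP maximizer is forced to $z^*_{ijkm}=\min\{z^*_{ik},z^*_{jm}\}\in\{0,1\}$. Your closing remark that \emph{strict} positivity of $\lambda_i\lambda_j\x_{ik}^T\x_{jm}$ (not merely nonnegativity, as literally stated in the lemma's proof) is what pins down the optimal set rather than just the optimal value is a worthwhile sharpening of the paper's argument.
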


We conclude this section with a closer look at the size of MIP (\ref{Gen-MIP}) in terms of the number of variables and constraints. For simplicity, let all $n$ measures have $p$ support points. Then there are $n\cdot p$ $0,1$-variables of type $z_{ik}$. In the following, we refer to support points by their variables $z_{ik}$. 

Each $z_{ik}$ is matched with $(n-1)\cdot p$ variables $z_{ijkm}$, one for each point $z_{jm}$ in another measure. As, $i<j$, there exist $\frac{1}{2}((n\cdot p) \cdot ((n-1) \cdot p))= \binom{n}{2}\cdot p^2$ continuous variables $z_{ijkm}$. Together, there are a total of $$n\cdot p \;\;\text{$0,1$-variables and } \binom{n}{2}\cdot p^2\;\; \text{continuous variables.}$$

The main constraints are comprised of two types: first, there are $n$ equalities $\sum_{k = 1}^{p} z_{ik} = 1$, one for each measure $P_i$. Second, for each $z_{ijkm}$, there are two inequalities $z_{ijkm}\leq z_{ik}, z_{jm}$. As $i<j$, this gives $\binom{n}{2} \cdot 2p^2 = n(n-1)\cdot p^2$ constraints. 
In total, we have $$n+n(n-1)\cdot p^2 \;\;\text{main constraints}.$$

Thus, MIP (\ref{Gen-MIP}) has a quadratic number of continuous variables and main constraints in the number $n$ of measures and number $p$ of support points in each measure, as well as a linear number of $0,1$-variables. While this contrasts sharply with the number $p^n$ of possible combinations, and while it is a significant improvement over IP (\ref{Gen-IP}) (where the $z_{ijkm}$ are $0,1$-variables and each has an additional main constraint), of course, MIPs with such scaling generally quickly become hard to solve. In the next section, we discuss our strategy for and implementation of a solution approach.

\section{Solution Approach and Practical Implementation}\label{sec:IpvsLP}

We begin with the design of a viable solution approach to MIP (\ref{Gen-MIP}), informed by the observations in Section \ref{sec:mip}. Then, we present some computational results and discuss advantages and disadvantages of our approach. 

\subsection{A Branch-and-Bound Strategy}\label{sec:bandb}

Our main goal is the design of a strategy for the solution of MIP (\ref{Gen-MIP}). In general, mixed-integer programs can be difficult to solve. However, the observations in Section \ref{sec:mip} lead to a quite natural and promising approach. Recall that the variables that are required to be integer are only the $z_{ik}$: if and only if these form a $0,1$-vector, the result corresponds to a combination of support points from each measure.

By Theorem \ref{thm:main2}, the optimal sets of MIP (\ref{Gen-MIP}) and IP (\ref{Gen-IP}) are identical. This allows an immediate application of Lemma \ref{lem:auxvar} to an LP relaxation of MIP (\ref{Gen-MIP}): if an optimal solution $\z^*$ in such a relaxation exhibits $z^*_{ik}\in \{0,1\}$, then all variables $z^*_{ijkm}$ `automatically' satisfy $z^*_{ijkm}\in \{0,1\}$, too. This means that a branch-and-bound approach for solving MIP (\ref{Gen-MIP}) requires branching only over the set of $n\cdot p$ $0,1$-variables $z_{ik}$. We are going to use such a branch-and-bound approach.

First, let us take a closer look at the LP relaxation of MIP (\ref{Gen-MIP}). Typically, constraints of type $\z^1 \in \{0,1\}$ are relaxed to $0\leq \z^1 \leq 1$. However, note that $\z^1\leq 1$ is implied by $\sum_{k = 1}^{|P_i|} z_{ik}  = 1 $ for all $i = 1, \ldots, n$. Thus it suffices to add the constraints $0\leq \z^1$ to the system. We arrive at the following LP.

\begin{equation*}\label{Gen-LP}
\begin{array}{crll}
  \mathrm{max}  & \sum\limits_{i=1}^n \sum\limits_{k=1}^{|P_i|} y_{ik}z_{ik} - \sum\limits_{i=1}^{n-1}&\sum\limits_{j=i+1}^{n} \sum\limits_{k=1}^{|P_i|} \lambda_i \lambda_j \x_{ik}^T \x_{ik}z_{ik}-&\sum\limits_{i=1}^{n-1}\sum\limits_{j=i+1}^{n} \sum\limits_{m=1}^{|P_j|} \x_{jm}^T \x_{jm}z_{jm}\\ &+2\sum\limits_{i=1}^{n-1}\sum\limits_{j=i+1}^{n} \sum\limits_{k=1}^{|P_i|} \sum\limits_{m=1}^{|P_j|} & \lambda_i \lambda_j \x_{ik}^T \x_{jm}z_{ijkm}  \nonumber \tag{Gen-LP} \\
\mathrm{s.t.} & \sum\limits_{k = 1}^{|P_i|} z_{ik} & = 1  &\forall i = 1, \ldots, n \\
& z_{ijkm}  &\leq z_{ik}, &\forall i = 1, \ldots, n, \forall j = 1, \ldots, n, j > i,\\
&&&\forall k = 1, \ldots, |P_i|, \forall m = 1, \ldots, |P_j| \\
&z_{ijkm}  &\leq z_{jm}, &\forall i = 1, \ldots, n, \forall j = 1, \ldots, n, j > i,\\
&&&\forall k = 1, \ldots, |P_i|, \forall m = 1, \ldots, |P_j| \\
& \z^1  &\geq 0 \nonumber
\end{array}
\end{equation*}
For a simple discussion, we again assume that all $n$ measures have $p$ support points for the remainder of this section. LP (\ref{Gen-LP}) then has the same number of $n\cdot p + \binom{n}{2}\cdot p^2$ variables as the MIP, and an additional $n\cdot p$ constraints that replace the $0,1$-domain of the $z_{ik}$, for a total of $n+n(n-1)\cdot p^2+np$ constraints. This quadratic scaling leads to relatively low memory requirements for moderate problem sizes; see Section \ref{sec:computations}.

It suffices to search for an {\em optimal, integral vertex} $\z^*$ of LP (\ref{Gen-LP}) to match the optimum for MIP (\ref{Gen-MIP}) due to containment of the optimal set of the LP in the $[0,1]$-unit hypercube in the underlying space. To measure `how far' one is from having an integer solution, we now study the number of fractional components -- we refer to this as {\em fractionality} -- of a vertex of LP (\ref{Gen-LP}). Early computations immediately revealed that LP (\ref{Gen-LP}) can lead to fractional optimal vertices. In particular, while the constraint matrix has only $0,1$-entries, it is not totally-unimodular; we provide a brief argument in Appendix \ref{app:TU}. 

The study of fractionality serves two purposes. First, we show that there are not only fully-fractional solutions, i.e., solutions where all components are fractional, but even fully-fractional {\em vertices}. This further justifies the use of classical integer programming techniques, such as branch-and-bound, to `break up' a high fractionality. In other situations, specifically if the number of fractional variables is provably low, simpler approaches, such as tailored rounding routines, sometimes suffice in practice. Such an example appeared in our previous work, for example, in \cite{bbg-11}. Second, we are able to identify a repeated structure in the fractional components of a vertex. This helps with the selection of promising branching rules.


We begin by showing the existence of a fully-fractional vertex of LP (\ref{Gen-LP}). It is not hard to state a fully-fractional solution $\z^*$ of LP (\ref{Gen-LP}). Consider an input of $n\geq 2$ measures of $p\geq 2$ support points. Then $\z^*$ defined by setting $z^*_{ik}=\frac{1}{p}$ for all $i\leq n, k\leq p$ and $z^*_{ijkm}=\min\{z^*_{ik}, z^*_{jm}\}=\frac{1}{p}$ clearly gives a feasible solution to LP (\ref{Gen-LP}). We now show that $z^*$ is, in fact, a vertex.

\begin{lemma}\label{lem:fullyfractional}
Let $n\geq 2, p\geq 2$. Then $\z^*$ defined by $z^*_{ik}=\frac{1}{p}$ for all $i\leq n, k\leq p$ and $z^*_{ijkm}=\frac{1}{p}$ for all $i\leq n, j\leq n, j> i$ and $ k\leq p, m \leq p$ is a vertex of LP (\ref{Gen-LP}). 
\end{lemma}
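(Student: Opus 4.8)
The plan is to verify that the proposed point $\z^*$ is a vertex of the polyhedron defined by LP (\ref{Gen-LP}) by showing that the set of constraints active at $\z^*$ has full rank, equivalently that the only direction $\mathbf{v}$ along which one can move while remaining feasible and keeping all active constraints tight is $\mathbf{v}=\mathbf{0}$. Recall that the number of variables is $np+\binom{n}{2}p^2$, so I must exhibit that many linearly independent active constraints, or equivalently argue directly that the system of active equalities pins down $\z^*$ uniquely.

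First I would catalogue which constraints are active at $\z^*$. The $n$ equality constraints $\sum_{k=1}^{p} z_{ik}=1$ are always active. Since every $z^*_{ik}=\tfrac1p>0$, none of the nonnegativity constraints $\z^1\geq 0$ is active, so those contribute nothing. The active constraints among the coupling inequalities $z_{ijkm}\leq z_{ik}$ and $z_{ijkm}\leq z_{jm}$ are exactly those for which $z^*_{ijkm}=\min\{z^*_{ik},z^*_{jm}\}$ attains its bound. Here, because all of $z^*_{ik}$ and all of $z^*_{ijkm}$ equal $\tfrac1p$, we have $z^*_{ijkm}=z^*_{ik}=z^*_{jm}=\tfrac1p$ for every index, so \emph{both} inequalities $z_{ijkm}\leq z_{ik}$ and $z_{ijkm}\leq z_{jm}$ are active for every $(i,j,k,m)$. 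This is the key observation making the argument work.

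Next I would argue that these active constraints force any feasible perturbation to vanish. Suppose $\z^*+\mathbf{v}$ stays feasible and keeps all the above constraints tight for small perturbations in a feasible direction $\mathbf{v}=(v_{ik},v_{ijkm})$. The two active coupling constraints for each index give $v_{ijkm}=v_{ik}$ and $v_{ijkm}=v_{jm}$, hence $v_{ik}=v_{jm}$ for all indices $i<j$, $k$, $m$. Because $n\geq 2$ and $p\geq 2$, this chain of equalities links every $v_{ik}$ to every other across distinct measures, forcing all $v_{ik}$ to share a common value $v_{ik}\equiv t$, and likewise $v_{ijkm}\equiv t$. But the equality constraints require $\sum_{k=1}^{p} v_{ik}=0$, giving $p\,t=0$, hence $t=0$ and so $\mathbf{v}=\mathbf{0}$. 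Thus $\z^*$ is the unique point satisfying the active system, which is precisely the definition of a vertex.

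The main obstacle I anticipate is not any single computation but making the linear-independence / uniqueness bookkeeping clean: one must be careful that the coupling constraints are stated only for $i<j$ (as the paper emphasizes), and that the equality constraints interact correctly with the $v_{ik}=v_{jm}$ identities to pin down the single free scalar. An alternative, equivalent route would be to count active linearly independent constraints directly and check the count meets the ambient dimension, but I expect the perturbation argument above to be cleaner, since it exploits the uniform structure of $\z^*$ and avoids an explicit rank computation over the large constraint matrix.
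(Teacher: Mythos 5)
Your proof is correct, but it takes a different route from the paper's. The paper proves the rank condition \emph{constructively}: it interprets the coupling constraints $z_{ijkm}\leq z_{ik}$, $z_{ijkm}\leq z_{jm}$ as the node--arc incidence matrix of a bipartite graph and explicitly builds a spanning tree of $np+\binom{n}{2}p^2-1$ independent active rows, then adds one equality $\sum_k z_{ik}=1$ to reach full rank. You instead show that the \emph{entire} active system has trivial kernel: since every $z^*_{ik}=z^*_{ijkm}=\tfrac1p$, both coupling constraints are tight everywhere, so any direction $\mathbf{v}$ in the lineality space of the active constraints satisfies $v_{ijkm}=v_{ik}=v_{jm}$ for all $i<j$, $k$, $m$; with $n\geq 2$, $p\geq 2$ this chains all components to a single scalar $t$, and any one equality row forces $pt=0$. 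The two arguments are two sides of the same rank computation --- indeed the paper's proof observes in passing that the solution set of the spanning-tree subsystem is $c\cdot(1,\dots,1)^T$, which is exactly the one-dimensional kernel you collapse --- but yours avoids the incidence-matrix and cycle machinery entirely and is the cleaner of the two for this lemma. What the paper's construction buys in exchange is an explicit certificate (a concrete basis of active rows), which is the same style of bookkeeping reused in the proof of Lemma~\ref{thm:auxvar}. One small omission: you should note, as the paper does just before the lemma statement, that $\z^*$ is feasible (trivially, $\sum_k \tfrac1p=1$ and $\tfrac1p\leq\tfrac1p$), since being a vertex presupposes membership in the polyhedron.
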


\begin{proof}
Recall that, given a representation of a polyhedron, to form a vertex there has to exist a set of active constraints whose normals form a matrix of full rank equal to the dimension. In other words, one have to identify a row submatrix of active constraints with a rank equal to the dimension of the underlying space, i.e., equal to the number of variables $n\cdot p + \binom{n}{2}\cdot p^2$ of LP (\ref{Gen-LP}).

To this end, note that the second and third types of constraints of LP (\ref{Gen-LP}), rewritten as 
\begin{equation*}\label{LP'}
\begin{array}{crlll}
& z_{ijkm} - z_{ik} \quad &\leq 0, & \;\;\; \forall i = 1, \ldots, n, \forall k = 1, \ldots, p \\
&z_{ijkm} - z_{jm} \quad &\leq 0, & \;\;\; \forall j= 1, \ldots, n, j>i, \forall m = 1, \ldots, p
\end{array}
\end{equation*}
form a collection of rows with a single $+1$ and a single $-1$, and $0$ everywhere else. The corresponding row submatrix is the transpose of the node-arc incidence matrix of a directed, bipartite graph $G=(V,E)$ with vertex set $V=A\dot{\cup} B$, where $A$ is the set of $z_{ijkm}$, $B$ the set of $z_{ik}$, and $E$ the set of directed arcs from $A$ to $B$ that connect each $z_{ijkm}$ with $z_{ik}$ and $z_{jm}$.

For a node-incidence matrix, a column set is linearly independent if and only if it does not form a cycle in the underlying undirected graph. Based on this idea, we identify a set $T$ of $n\cdot p + \binom{n}{2}\cdot p^2 -1$ arcs in this graph that do not form a cycle. These arcs correspond to a row submatrix of rank $n\cdot p + \binom{n}{2}\cdot p^2 -1$. Afterwards, we then have to identify only one additional, independent row from LP (\ref{Gen-LP}).

First, note that each $z_{ijkm}$ is connected to only $z_{ik}$ and $z_{jm}$. To form a cycle in $G$ that includes a $z_{ijkm}$, one has to select {\em both} incident arcs. This allows us to begin construction of $T$ by adding {\em exactly one} of these two arcs for all $z_{ijkm}$, which gives $\binom{n}{2}\cdot p^2$ arcs. Informally, we start with a maximal matching in the underlying bipartite graph. 

It remains to identify $n\cdot p -1$ further arcs that can be added to $T$ without closing a cycle. First, we add all the missing $(n-1)\cdot p$ arcs to connect $z_{11}$ to all $z_{jm}$ for $2\leq j\leq n$, $m\leq p$ by a path of two edges incident to $z_{1j1m}$. Second, we add all the missing $p-1$ arcs to connect $z_{21}$ to all $z_{1m}$ for $2\leq m\leq p$ by a path of two edges incident to $z_{12m1}$.

We arrive at a total of $(n-1)\cdot p + (p-1)= n\cdot p -1$ additional arcs that do not form a cycle in the graph: only $z_{11}$ and $z_{21}$ are connected (by paths of two edges) to other nodes of type $z_{ik}$, but a cycle would have to include at least one more $z_{ik}$. 
Informally, we have constructed a spanning tree in the underlying bipartite graph.

It remains to identify one additional independent row in the system. Of course, the equality constraints $\sum_{k = 1}^{p} z_{ik}  = 1$ for $i\leq n$ are always satisfied. The rows corresponding to set $T$ (augmented with their right-hand sides $0$) form a system with feasible solution set $z=c\cdot (1,\dots,1)^T$ for all $c \in \mathbb{R}$, and adding any of the equalities $\sum_{k = 1}^{p} z_{ik}  = 1$ gives precisely $c=\frac{1}{p}$. Thus, we add the row of one of the equalities ($\sum_{k = 1}^{p} z_{ik}  = 1$ for any $i$) to $T$ and obtain a row submatrix of full rank. This shows that $\z^*$ is a vertex of LP (\ref{Gen-LP}). \qed
\end{proof}

While the above example might seem pathological, highly and fully-fractional solutions do appear in our practical computations in Section \ref{sec:computations}; c.f., Table \ref{table:frac}. As we will see, classical branching rules, such as branching on lowest-index variables or those closest or furthest from integrality, all are viable to break up the high fractionality. 

We conclude this section by proving a sufficient property for an optimal vertex $\z^*$ of LP (\ref{Gen-LP}) to be integral. We show that the fractional values in $\z^1$, i.e., the variables $z^*_{ik},z^*_{jm}$, cannot all be different. In In fact, there must be a repeated value $z^*_{ik}=z^*_{jm}$ for some $i\neq j$ and some $k,m$. Otherwise $\z^*\in\{0,1\}$. 

\begin{lemma}\label{thm:auxvar}
Let $n\geq 2, p\geq 2$. Let further $\z^*$ be an optimal vertex of LP (\ref{Gen-LP}) and let $z^*_{ik} \neq z^*_{jm}$ for all $0<z^*_{ik},z^*_{jm}<1$ with $i \neq j$. Then $\z^*\in \{0,1\}$.
\end{lemma}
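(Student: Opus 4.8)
The plan is to argue by contradiction using the vertex characterization through the rank of the active constraint system. I would assume that $\z^*$ is an optimal vertex satisfying the distinctness hypothesis but that $\z^*\notin\{0,1\}$, so some primal component $z^*_{i_0 k_1}$ is strictly fractional, $0<z^*_{i_0 k_1}<1$. The goal is to exhibit a \emph{nonzero} direction $\mathbf{d}=(d_{ik},d_{ijkm})$ lying in the null space of all constraints active at $\z^*$. Since a vertex has an active system of full rank $n\cdot p+\binom{n}{2}\cdot p^2$ (as in the proof of Lemma \ref{lem:fullyfractional}), such a $\mathbf{d}$ contradicts vertexhood and thus forces $\z^*\in\{0,1\}$.

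First I would record the active constraints. By the min-property $z^*_{ijkm}=\min\{z^*_{ik},z^*_{jm}\}$ shown after Lemma \ref{lem:auxvar} (which holds also for fractional $\z^*$; note also that at a vertex each quadruple must have at least one active upper bound, else the coordinate direction $e_{ijkm}$ already lies in the null space), the active upper bounds are exactly the minimizing ones: only $z_{ijkm}\le z_{ik}$ if $z^*_{ik}<z^*_{jm}$, only $z_{ijkm}\le z_{jm}$ if $z^*_{ik}>z^*_{jm}$, and \emph{both} precisely when $z^*_{ik}=z^*_{jm}$. In the homogeneous system these read $d_{ijkm}=d_{ik}$, respectively $d_{ijkm}=d_{jm}$, and at a tie they force $d_{ik}=d_{jm}$. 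The remaining active rows are the $n$ equalities $\sum_k z_{ik}=1$ and the nonnegativities at indices with $z^*_{ik}=0$.

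Next I would locate a second fractional entry in measure $i_0$: since $\sum_k z^*_{i_0 k}=1$ is integral and the non-fractional entries already contribute an integer, measure $i_0$ must contain at least two fractional entries, so fix $z^*_{i_0 k_2}$ with $k_2\neq k_1$. I then define $d_{i_0 k_1}=+1$, $d_{i_0 k_2}=-1$, and $d_{ik}=0$ for all other primal indices, extending to the auxiliary variables by setting $d_{ijkm}=d_{ik}$ if $z^*_{ik}<z^*_{jm}$, $d_{ijkm}=d_{jm}$ if $z^*_{ik}>z^*_{jm}$, and $d_{ijkm}=d_{ik}=d_{jm}$ at a tie. This $\mathbf{d}$ is nonzero and satisfies the equality rows ($+1-1=0$ in measure $i_0$, and $0$ elsewhere) and all active nonnegativities (the only nonzero primal components sit at fractional, hence strictly positive, coordinates).

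The crux — and the only place the hypothesis is used — is verifying that the auxiliary assignment is consistent on every active upper-bound row, i.e. that no tie quadruple forces a contradiction. A tie quadruple imposes $d_{ik}=d_{jm}$, which can only fail for a quadruple having $(i_0,k_1)$ or $(i_0,k_2)$ as an endpoint, since elsewhere $\mathbf{d}$ vanishes. But every quadruple is indexed across two \emph{distinct} measures, so such a tie would mean $z^*_{i_0 k_1}=z^*_{jm}$ (or the $k_2$ analogue) for some $j\neq i_0$; as $z^*_{i_0 k_1}$ is fractional its partner $z^*_{jm}$ would also be fractional, directly violating the distinctness hypothesis, while a non-fractional partner cannot equal a fractional value at all. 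Hence every quadruple touching $(i_0,k_1)$ or $(i_0,k_2)$ is strict with a single active upper bound, all ties occur among indices where both endpoints carry $d=0$, and $\mathbf{d}$ satisfies every active row. This yields the desired nonzero null vector and the contradiction. I expect this tie-consistency check to be the only delicate step; the surrounding bookkeeping reuses the incidence-graph structure already set up for Lemma \ref{lem:fullyfractional}.
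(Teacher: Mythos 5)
Your proof is correct, and it reaches the contradiction by a genuinely different mechanism than the paper. The paper's proof also assumes $\z^*\notin\{0,1\}$ and works with the vertex characterization via active constraints, but it proceeds by \emph{counting}: it carefully builds a maximal independent set $T$ of active rows ($n$ equalities, at most $n(p-1)-1$ active nonnegativities, and exactly one auxiliary upper bound per quadruple, with small rank arguments showing a second auxiliary row can never be added independently) and concludes $|T|\leq n\cdot p+\binom{n}{2}\cdot p^2-1$, one short of full rank. You instead exhibit an explicit nonzero vector $\mathbf{d}$ annihilated by every active constraint, i.e., a certificate of the same rank deficiency via the null space; the two are equivalent by rank--nullity, but your route avoids the paper's bookkeeping about which rows can be added ``while retaining linear independence,'' and the perturbation $\z^*\pm\epsilon\mathbf{d}$ makes the failure of vertexhood completely concrete. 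Both arguments use the distinctness hypothesis at the same pressure point: in the paper it guarantees only one auxiliary constraint is active per quadruple with a fractional endpoint; in yours it rules out tie quadruples that would force the inconsistent condition $1=d_{i_0k_1}=d_{jm}=0$. Two smaller points in your favor: your observation that measure $i_0$ must contain a \emph{second} fractional entry (because $\sum_k z^*_{i_0k}=1$ is integral) is exactly the fact the paper uses implicitly to get the ``$-1$'' in its count of active nonnegativities; and your parenthetical derivation of $z^*_{ijkm}=\min\{z^*_{ik},z^*_{jm}\}$ from vertexhood alone (no lower bound on $z_{ijkm}$ exists in LP (\ref{Gen-LP}), so some upper bound must be active at a vertex) is cleaner than relying on optimality and the assumption $\x>0$, and slightly strengthens the statement to arbitrary vertices.
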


\begin{proof}
We prove the claim by contradiction. Assume $\z^* \notin \{0,1\}$ and $z^*_{ik}\neq z^*_{jm}$ for all $0<z^*_{ik},z^*_{jm}<1$ with $i \neq j$. We will show that the number of (independent) active constraints is strictly less than the number $n\cdot p + \binom{n}{2}\cdot p^2$ required for a vertex of LP (\ref{Gen-LP}). We now build this set $T$ of active constraints.

Note that the $n$ equality constraints are always active and independent from each other; they are added to $T$. Note that variables $z_{ik}$ only appear in the $i$-th equality constraints and that, for each $i$, at least one $z^*_{ik}$ satisfies $z^*_{ik}>0$. Thus one can add any active domain constraints $z_{ik}\geq 0$ to $T$ while retaining linear independence. As $\z^* \notin \{0,1\}$, at most $n\cdot (p-1) -1$ domain constraints of type $z_{ik}\geq 0$ are active. At this point, the size $|T|$ of $T$ is bounded above by $|T| \leq n + n\cdot (p-1) -1= n\cdot p -1 $.

The remaining $\binom{n}{2}\cdot p^2 +1$ active, independent constraints need to come from the auxiliary constraints
 \begin{align*}
z_{ijkm} & \leq z_{ik} \\
z_{ijkm} & \leq z_{jm}
\end{align*}
for the different $z_{ijkm}$. As $\z^*$ is optimal, $z^*_{ijkm}=\min\{z^*_{ik}, z^*_{jm}\}$. Thus at least one of the auxiliary constraints is active for each $z_{ijkm}$. Such a constraint can be added to $T$ while retaining linear independence, as the variable $z_{ijkm}$ is not used in any of the other constraints in $T$.

As $z^*_{ik} \neq z^*_{jm}$ for all $0<z^*_{ik},z^*_{jm}<1$ with $i \neq j$, only one auxiliary constraint is active whenever at least one of the variables is fractional. That constraint is added to $T$. For $z^*_{ik} = z^*_{jm} = 0$ or $=1$, however, both constraints are active. If $z^*_{ik} = z^*_{jm} = 0$, the corresponding domain constraints $z_{ik}\geq 0$ and $z_{jm}\geq 0$ already are in $T$. The system
 \begin{align*}
z_{ijkm} & \leq z_{ik} \\
z_{ijkm} & \leq z_{jm}  \\
z_{ik} & \geq 0\\
z_{jm} & \geq 0
\end{align*}
only has rank $3$, as it involves $3$ variables. Thus the second constraint involving $z_{ijkm}$ may not be added to $T$ without violating independence.

Finally, if $z^*_{ik} = z^*_{jm} = 1$, then the system
 \begin{align*}
 \sum\limits_{l = 1}^{p} z_{il} & = 1\\
z_{il} & \geq 0 \;\; \forall l\leq p, l\neq k
 \end{align*}
is of full rank $p$. The same holds for 
 \begin{align*}
 \sum\limits_{l = 1}^{p} z_{jl} & = 1\\
z_{jl} & \geq 0 \;\; \forall l\leq p, l\neq m.
 \end{align*}
Combining these two systems with the constraints
  \begin{align*}
z_{ijkm} & \leq z_{ik} \\
z_{ijkm} & \leq z_{jm}
\end{align*}
gives a system of rank $2p+1$, as it involves $2p+1$ variables. Again, the second constraint involving $z_{ijkm}$ may not be added to $T$ without violating independence.

 Thus exactly $\binom{n}{2}\cdot p^2$ auxiliary constraints -- one for each $z_{ijkm}$ -- can be added to $T$ while retaining linear independence. The total number of active constraints is  at most $n\cdot p +\binom{n}{2}\cdot p^2 -1<n\cdot p + \binom{n}{2}\cdot p^2$, strictly less than what is required for a vertex of LP (\ref{Gen-LP}). This proves the claim. \qed
\end{proof}

Lemma \ref{thm:auxvar} implies that there must be repeats of a fractional value in any fractional optimal vertex. This observation makes it promising to try a branching rule that prioritizes the elimination of such repeats and compare its performance to standard branching rules. Further, such a repeat has to appear for fractional values in {\em different} measures. 
Thus at least two measures are split fractionally; at least four variables are fractional. This means that the depth of a branch-and-bound tree is bounded by $n\cdot p-3$, regardless of the branching rule.

\subsection{Computational Experiments}\label{sec:computations}
The purpose of our computational experiments is twofold: one, to confirm the predicted behavior of solving LP (\ref{Gen-LP}) in regards to the fractionality of the vertices, and two, to explore the practicality of using MIP (\ref{Gen-MIP}). To these ends, we implemented a column generation routine using the Gurobi Mixed Integer Program (MIP) solver using the C++ API; source code is available at \url{https://github.com/StephanPatterson/PricingIP}. This MIP solver, as is standard in commercial solvers, is based on branch-and-bound, runs in parallel, and by default supplements with cutting plane methods and other heuristics. We also implemented a custom branch-and-bound routine fully in C++ (that only uses the Gurobi Optimizer as part of its node exploration) for comparing different branching strategies; this source code is available at \url{https://github.com/StephanPatterson/IP-Pricing-BB}.

When solving MIP (\ref{Gen-MIP}) through the MIP solver in Gurobi, we found the solution times to be highly influenced by solver options. We first disabled the cutting plane methods, as adding constraints (particularly those without the same structure currently present in the problem) can increase solution times. Disabling heuristics also improved solution times, while disabling the presolver dramatically increased solution times, and remains enabled in the following experiments. We also set the MIP solver to focus on proving optimality (MIPFocus = 2).

Our experiments were run on a laptop (MacBook Pro, 2.4 GHz Intel Core i9, 32 GB of RAM, SSD) with 16 cores for parallel computations. The experiments consist of running column generation on $n$ measures, with $n$ between $3$ and $36$, and each measure containing $2-12$ support points. Note that these problem sizes, especially for the larger values of $n$ are already very challenging for an exact barycenter computation \cite{abm-16,bp-18}. We report on the averages for many repeated runs, with $10-100$ repeats depending on problem size. The data for the experiments consists of event locations given in latitude and longitude (support points). These locations do not underlie an obvious structure, which is the hardest setting for barycenter computations \cite{ab-21,bp-18}. The events are grouped by the month and year in which they occurred (measures).

We compare the use of a previous column generation strategy, based on a direction evaluation of the reduced cost vector $y^T A - c$ as in \cite{bp-21}, with the new approaches based on MIP (\ref{Gen-MIP}). For comparisons involving memory requirements or computation times, we relate to the direct use of the Gurobi MIP solver for MIP (\ref{Gen-MIP}). For information on fractionality, best branching strategies, and the size of the branch-and-bound tree, we relate to our custom implementation. Throughout this section, we refer to the direct evaluation of the reduced cost vector as the {\em classic} approach and to the ones based on MIP (\ref{Gen-MIP}) as the {\em MIP} approach. 

\subsubsection{A Trade-off of Memory and Setup Times.}

We begin with a memory comparison. As expected -- it was our main goal -- the use of a mixed-integer program provides significant savings in required memory over the classic approach. Table \ref{tab:mem} exhibits the gap measured after the first solve of the pricing problem and the addition of the first column to the master problem. In the tables in this section, $n$ refers to the number of measures, {\em support} refers to the total number of support points, and {\em variables} refers to the number of variables in MIP (\ref{Gen-MIP}). As problem size grows, the memory gap widens and reaches more than two orders of magnitude.

\begin{table}[]
    \centering
    {\begin{tabular}{|c|c|c|c|c|} \hline
   \; n \; & \; Support \; & \; Variables \; & \; Classic \; & \; MIP \; \\ \hline
    12 & 42 & 1,990,656 & 53 & 75 \\ \hline
    14 & 57 & 28,449,792 & 661 & 104 \\ \hline
    15 & 50 & 31,850,496 & 607 & 94 \\ \hline
    18 & 56 & 254,803,968 & 5,730 & 110 \\ \hline
    15 & 71 & 731,566,080 & 10,910 & 143 \\ \hline
    16 & 60 & 1,019,215,872 & 22,910 & 146 \\ \hline
    17 & 73 & 1,567,641,600 & 23,370 & 139 \\ \hline 
    16 & 69 & 1,820,786,688 & 41,000 & 146 \\ \hline
    \end{tabular}\vspace*{0.25cm}}
    \caption{Total memory used in MB.}
    \label{tab:mem}
\end{table}

By contrast, even with the above described solver settings, the MIP approach only shows improvement in solver times for the largest experiments from our data set. In Table \ref{tab:mem2}, we exhibit the gap in average time per solution of the pricing problem. Especially for small problem sizes, our experiments reveal a noticeable cost for the setup of the MIP that is not needed for the classic approach; see the first row in the table. For the largest problems, however, the MIP approach is competitive.  


\begin{table}[]
    \centering
    {\begin{tabular}{|c|c|c|c|c|} \hline
    \; n \; & \; Support \; & \; Variables \; & \; Classic \; & \; MIP \; \\ \hline
    12 & 42 & 1,990,656 & 0.068& 4.18 \\ \hline
    15 & 50 & 31,850,496 & 1.12 & 10.07 \\ \hline
    14 & 57 & 28,449,792 & 1.16 &18.450  \\ \hline
    18 & 56 & 254,803,968 & 9.09 &  18.12 \\ \hline
    15 & 71 & 731,566,080 & 93.86 & 110.39 \\ \hline
    19 & 60 & 1,019,215,872 & 30.73 & 22.30 \\ \hline
    17 & 73 & 1,567,641,600 & 90.04 & 115.28 \\ \hline
    16 & 69 & 1,820,786,688 & 126.47 & 70.22 \\ \hline
    \end{tabular}\vspace*{0.25cm}}
    \caption{Average time per solution of the pricing problem in seconds.}
    \label{tab:mem2}
\end{table}

Summing up, the MIP approach performs as expected and intended. It cannot resolve the underlying challenge in the computations, but successfully performs a trade-off of longer setup times for lower memory requirements. 
This facilitates a scaling to larger problem sizes. In practice, the choice of which method to use may depend largely on the available RAM memory: the classic approach outperforms the MIP approach as long as the exponentially-scaling input for the direct reduced cost computation fits in RAM. When the classic computations would have to make use of virtual RAM located on a hard drive, they slow down dramatically, and it becomes better to use our proposed MIP approach. 

\subsubsection{Fractionality of Solutions.}

Next, we turn to the fractionality of solutions in LP (\ref{Gen-LP}), the relaxation of MIP (\ref{Gen-MIP}). In Lemma \ref{lem:fullyfractional}, we saw there exist pathological vertex solutions that are fully fractional. 

To measure the fractionality in practice, we generated random subsets of our input measures and an initial set of combinations for a feasible solution, in turn giving us the vector $y$ for the pricing objective. Then, we solved the resulting LP (\ref{Gen-LP}) and measured the percentage of the variables $z_{ik}$ which were fractional, along with the number of unique (different) fractional values.

Table \ref{table:frac} shows the results of these computations. They align with the theoretical analysis in Section \ref{sec:bandb} and further justify the design and use of a branch-and-bound method. The number of fractional values is very high throughout, sometimes reaching $100\%$, while the number of unique values is low. A good branching strategy would be one that not only `attacks' the high fractionality (in a sense, any branching strategy would do that), but prioritizes a reduction in the number of unique fractional values, which are already low to begin with.

\begin{table}[]
    \centering
    {\begin{tabular}{|c|c|c|c|} \hline
    \; n \; & \; Support \; & \; Fractional Values \; & \; Unique Fractional Values \; \\ \hline
    3 & 21 & 100\% & 1 \\ \hline
    8 & 29 & 86.2\% & 4 \\ \hline
    9 & 41 & 87.8\% & 5 \\ \hline
    12 & 57 & 100\% & 5 \\ \hline
    14 & 63 & 98.4\% & 6 \\ \hline
    18 & 56 & 96.4\% & 3 \\ \hline
    23 & 71 & 70.4\% & 3 \\ \hline
    36 & 132 & 84.8\% & 5 \\ \hline
    \end{tabular}\vspace*{0.25cm}}
    \caption{Percentage of variables $z_{ik}$ observed to be fractional when solving LP (\ref{Gen-LP}) and the number of unique fractional values.}\label{table:frac}
\end{table}

\subsubsection{Design of Branch-and-Bound for MIP (\ref{Gen-MIP}).} Finally, we discuss the design of a custom branch-and-bound method to solve MIP (\ref{Gen-MIP}). 
We begin with some general information and then turn to a comparison of different branching strategies. The method begins with solving the relaxation LP (\ref{Gen-LP}), whose solution provides an upper bound (or `maximum potential') for the objective value of MIP (\ref{Gen-MIP}). We will explore different strategies for choosing a variable $z_{ik}$ for branching; once chosen, in the left-hand subtree, $z_{ik}$ is set to $0$ and in the right-hand subtree, it is set to $1$. Since at most one decision per variable is required to reach integrality, the maximum height of the produced tree is $\sum_{i=1}^n |P_i|$; in fact, the more refined analysis at the end of Section \ref{sec:bandb} exhibits that this bound can be improved to $t_\text{B\&B}:=(\sum_{i=1}^n |P_i|) - 3$. This results in a theoretical maximum number of nodes in the tree up to $2^{t_\text{B\&B}}-1$. 
As we will see, in our practical computations, we only visit an extremely small fraction of these exponentially many possible nodes. One of the reasons for this is that by setting a variable to $1$, one implicitly sets all other variables for the same measure to $0$. The depth of the tree where pruning occurs, and how many nodes are avoided, will depend on the branching strategy, but, regardless, many right-hand branches will be pruned during the process.  
Further, our branch-and-bound implementation begins with an initial lower bound that also immediately allows for pruning branches that fall below it.  

Recall that the initialization of column generation requires an initial feasible solution.  We generate such an initial solution using the greedy strategy described in \cite{bp-21}, which iteratively creates combinations of support points out of the first support points in each measure available to receive mass, and assigns to each generated combination the maximum mass receivable by those support points. The generated solution has the aforementioned second use: the total transportation cost associated with it leads to an initial lower bound on the optimal value of MIP (\ref{Gen-MIP}) for the purpose of pruning nodes.


During the custom branch-and-bound, processing a node creates exactly two child nodes and the resulting (relaxed) MIPs are solved immediately. Solving the two new MIPs is implemented using the new Multiple Scenario feature in Gurobi 9.0. The assigned values of the branching variable (left-hand branch $0$, right-hand branch $1$) is achieved using the scenario upper bound and lower bound attributes, ScenNUB and ScenNLB, by setting the left-hand branch to have an upper bound of $0$, and the right-hand branch to a lower bound of $1$. Gurobi's output reports that the two scenarios are infeasible only if both scenarios are infeasible, so we check each scenario individually for infeasibility and prune the tree accordingly.

We are now ready to explore three branching strategies. Our goal is to identify a strategy that produces the smallest branch-and-bound tree, i.e., that processes the fewest nodes.

First, we consider a naive direct branching strategy in which the variables are branched on in the order they are given. That is, first $z_{11}$ is processed, then $z_{12}$, until the last support point of measure $P_1$ is reached; then the branching moves on to the points in measure $P_2$, and so on. We refer to this simple strategy as {\em Index Order}.

Second, we consider a {\em Closest to Integer} strategy in which the variable whose value is closest to $0$ or $1$ (but still some small tolerance away from integer to avoid issues of numerical stability) is branched on. Essentially this is a rounding strategy, based on the general idea that, in practice, a good solution to an integer program may sometimes be found by rounding a relaxed solution to integer values.

Third, we explore a strategy in which we select a (first) variable whose current fractional value is repeated most frequently in all variables. As observed earlier, fractional solutions in which all variables are assigned the same value are both theoretically possible and appear in practice. The goal of this branching strategy is to break apart this highly repetitious fractionality the most directly. We refer to this strategy as {\em Most Repeated}.

\begin{table}[]
    \centering
    {\begin{tabular}{|c|c|c|c|c|} \hline
        \; n \; & \; Support \; & \multicolumn{3}{c|}{Nodes Processed: Unsorted}  \\ \hline
         && \; Index Order \; & \; Closest to Integer \; & \; Most Repeated \; \\ \hline
         5 & 32 & 2282 & 2281 & 1028 \\ \hline
         6 & 34 & 4567 & 3755 & 2057 \\ \hline
         7 & 31 & 2870 & 3007 &  2204 \\ \hline
         8 & 29 & 2247 & 2032 & 1727 \\ \hline
         9 & 26 & 4225 & 3071 & 3071 \\ \hline
         9 & 32 & 15423 & 9366 & 8639 \\ \hline
         9 & 34 & 7212 & 4055 & 3455 \\ \hline
         10 & 29 & 15489 & 6143 & 6143 \\ \hline
    \end{tabular}\vspace*{0.25cm}}
    \caption{The number of nodes processed for three branching strategies. The measures were passed to the algorithm in their naturally occurring order.} 
    \label{tab:unsorted_bb}
\end{table}

In our initial experiments, we leave the measures in their naturally occurring order (`unsorted'). Results are shown in Table \ref{tab:unsorted_bb}. We observed that the fewest nodes are processed for the {\em Most Repeated} strategy, in which we focus on breaking apart the highly fractional nature of the solutions of the relaxation. This aligns with the results of our theoretical analysis in Section \ref{sec:bandb}, where we identified such a strategy as a particularly promising angle of attack.

The naive {\em Index Order} strategy performed worst. The performance of the rounding strategy {\em Closest to Integer} was generally between that of the {\em Index Order} and {\em Most Repeated} strategies: for larger instances it performed almost as well as the {\em Most Repeated} strategy; for smaller instances it performed similarly to the {\em Index Order} strategy. 
It is noteworthy that in all strategies, only an extremely small percentage of the theoretical maximum number of nodes of a branch-and-bound tree on the corresponding number of variables is ever explored (for these experiments, $2^{t_\text{B\&B}}$ lies between $10^7$ and $10^8$). 

In previous work on column generation \cite{bp-21}, we observed that sorting the measures so that the smallest measures (lowest number of support points) are first can be beneficial for solution times. We theorized that such a sorting may also be beneficial in the our branch-and-bound method, as particularly in the {\em Index Order} strategy, the variables for entire measures would be fixed at high levels of the tree. Table \ref{tab:sorted_bb} shows how the number of processed nodes changes with this preprocessing. The number of processed nodes dropped significantly for all strategies. As expected, the benefit is the largest for the {\em Index Order} strategy, but  {\em Most Repeated}, which focuses on breaking apart the highly repetitious fractionality of solutions, still outperforms the other strategies. 

\begin{table}[]
    \centering
    {\begin{tabular}{|c|c|c|c|c|} \hline
        \; n \; & \; Total Support \; & \multicolumn{3}{c|}{Nodes Processed: Sorted}  \\ \hline
         && \; Index Order \; & \; Closest to Integer \; & \; Most Repeated \; \\ \hline
         5 & 32 & 1560 & 1785 & 1028 \\ \hline
         6 & 34 & 3123 & 2941 & 2057 \\ \hline
         7 & 31 & 2185 & 2748 & 1575 \\ \hline
         8 & 29 & 1564 &1339 & 1080 \\ \hline
         9 & 26 & 4696 & 4292 & 3240 \\ \hline
         9 & 32 & 1597 & 1027 & 1023  \\ \hline
         9 & 34 & 4426 & 3768 & 3456 \\ \hline
         10 & 29 & 4797 & 3238 & 3071 \\ \hline
    \end{tabular}\vspace*{0.25cm}}
    \caption{The number of nodes processed for three branching strategies. The measures were presorted so that those with the fewest support points were first.}
    \label{tab:sorted_bb}
\end{table}

\section{Conclusion and Outlook}\label{sec:conclusion}

Fixed-support approximations are the arguably most popular approach to the barycenter problem, in particular for exact barycenter computations. In this paper, we devised an integer program, IP (\ref{Gen-IP}), to generate an optimal support point to add to such a fixed support for a better approximation. We proved that a simpler MIP (\ref{Gen-MIP}) can be solved instead of IP (\ref{Gen-IP}), and we designed a branch-and-bound method that uses the symmetry of the underlying problem for a tailored branching strategy. The main advantage of the new approach is a significant drop in memory requirement. While it comes at a trade-off of longer setup times, it facilitates larger-scale computations than before. 
We see two practical uses of the methods in this paper. First, one can perform a limited number of iterations to improve on a given approximate barycenter. 
Second, the ability to find exact barycenters for larger instances than before leads to a more reliable assessment of the quality of solutions returned by algorithms in the literature. 

There are a few promising directions for practical and theoretical future work. Commercial solvers such as Gurobi have branch-and-bound routines that are highly optimized, for example allowing for the parallel solution of several subproblems, but do not allow the integration of a tailored branching strategy, as we propose. Our custom implementation is a proof of concept designed to identify a best branching strategy and to identify the impact of presorting measures by size. It would not be hard to improve the custom implementation and close some of the performance gap. We are also planning to compare different options of commercial solvers, and follow their development cycle -- additional options for tweaking algorithms are added regularly -- for a better transfer of our strategies to a state-of-the-art solver.

We see the most exciting research direction in striving for a deeper understanding of the set of all possible combinations $S^*$, whose size and scaling is the bottleneck in exact barycenter computations and also for the column generation approach in this work and previous work. The size of $S^*$ drives the exponential scaling of the reduced-cost vector in \cite{bp-18} and determines the size of IP (\ref{Gen-IP}). In all of these settings, the ability to a priori rule out combinations from $S^*$ that cannot appear in an optimal solution may dramatically improve practical performance. For other combinations, it may be possible to bound the approximation error incurred if left out of consideration. This would be a step to leaving the setting of exact barycenter computations and working towards the efficient computation of improving support points for approximate large-scale computations.




\section*{Acknowledgments}
The first author was supported by Air Force Office of Scientific Research grant FA9550-21-1-0233 and NSF grant 2006183, Algorithmic Foundations, Division of Computing and Communication Foundations.

\bibliography{barycenters_literature}
\bibliographystyle{plain}

\begin{appendix}
\section{Constraint matrix of MIP (\ref{Gen-MIP})}\label{app:TU}
We exhibit that the constraint matrix of MIP (\ref{Gen-MIP}) is not totally-unimodular. While it is a $0,1$-matrix, it is not hard to verify any of the well-known criteria that rule out total unimodularity.

For example, consider the submatrix corresponding to variables $z_{11}, z_{12}, z_{21}, z_{1211}, z_{1221}$ and to constraints
\begin{align*}
    z_{11} +  z_{12} + \cdots & = 1 \\
-z_{11} +  z_{1211} & \leq 0 \\
-z_{21} +  z_{1211} & \leq 0\\
-z_{12} +  z_{1221} & \leq 0\\
-z_{21} +  z_{1221} & \leq 0 .
\end{align*}
The corresponding coefficient matrix has the form
\[U= 
\begin{pmatrix}
1 & 1  & 0 & 0 & 0\\
-1 & 0 & 0 & 1 & 0 \\
0 & 0 & -1 & 1 & 0 \\
0 & -1 & 0 & 0 & 1\\
0 & 0 & -1 & 0 & 1
\end{pmatrix}, 
\]
which is Eulerian, i.e., it has even row and column sums, but the total sum of elements is 2, and not a multiple of 4. This violates one of the properties of totally-unimodular matrices. Other criteria are easy to check as well. For example, the determinant of $U$ is $\text{det}(U)=-2$. 
\end{appendix}

\end{document}